\newtheorem{thm}{Theorem}
\newtheorem{cor}{Corollary}
\newtheorem{lem}{Lemma}
\newtheorem{exa}{Example}
\newtheorem{rem}{Remark}
\newtheorem{defn}{Definition}
\newtheorem{prop}{Proposition}
\newcommand{\R}{\mathbb R}
\newcommand{\C}{\mathbb C}
\newcommand{\T}{\mathbb T}
\newcommand{\N}{\mathcal N}
\newcommand{\G}{\mathcal G}
\newcommand{\V}{\mathcal V}
\newcommand{\E}{\mathcal E}
\begin{document}
\title{Complex Laplacians and Applications in Multi-Agent Systems \thanks{This work is supported by the National Natural
Science Foundation of China under grant~11301114 and Hong Kong Research Grants Council under grant~618511.}}
\author{Jiu-Gang~Dong, and~Li~Qiu,~\IEEEmembership{Fellow,~IEEE}%
\thanks{J.-G. Dong is with the Department of Mathematics, Harbin Institute of
Technology, Harbin 150001, China and is also with the Department of Electronic and Computer Engineering,
The Hong Kong University of Science and Technology, Clear Water Bay,
Kowloon, Hong Kong, China (e-mail: jgdong@hit.edu.cn).}%
\thanks{L. Qiu is with the Department of Electronic and Computer Engineering,
The Hong Kong University of Science and Technology, Clear Water Bay,
Kowloon, Hong Kong, China (e-mail: eeqiu@ust.hk).}
}

\maketitle

\begin{abstract}
Complex-valued Laplacians have been shown to be powerful tools in the study of distributed coordination of multi-agent systems in the plane including formation shape control problems and set surrounding control problems. In this paper, we first provide some characterizations of complex Laplacians. As an application, we then establish some necessary and sufficient conditions to ensure that the agents interacting on complex-weighted networks converge to consensus in some sense. These general consensus results are used to discuss some multi-agent coordination problems in the plane.
\end{abstract}

\begin{IEEEkeywords}
Complex Laplacians, multi-agent systems, complex consensus.
\end{IEEEkeywords}

\section{Introduction}
In the past decade there has been increasing interest in studying the distributed coordination and control of multi-agent systems, which appear in diverse situations including consensus problems, flocking and formation control~\cite{JLM03,OSM04,OSFM07,OT11}. As a natural tool, Laplacian matrices of a weighted graph (modeling the interaction among agents) are extensively used in the study of the distributed coordination problems of multi-agent systems. Most results are based on real Laplacians, see, e.g., the agreement~\cite{BS03,OSM04,OSFM07,RB05}, generalized consensus~\cite{CLHY11,Morbidi13} and bipartite consensus on signed graphs~\cite{Altafini13,MSJCH14}. Very recently, complex Laplacians have been applied to multi-agent systems~\cite{LDYYG13,LWHF14,LH14}. In particular, formation shape control problems in the plane with complex Laplacians were discussed in~\cite{LDYYG13,LWHF14}, while based on complex Laplacians, new methods were developed in~\cite{LH14} for the distributed set surrounding design, which contains consensus on complex-valued networks as a special case.

It has been shown that complex Laplacians are powerful tools for multi-agent systems and can significantly simplify the analysis once the state space is a plane.
From this point, it is worth investigating complex Laplacians independently. The main goal of this paper is to study the properties of complex Laplacians. More precisely, for a complex-weighted graph, we provide a necessary and sufficient condition ensuring that the complex Laplacian has a simple eigenvalue at zero with a specified eigenvector. The condition is in terms of connectivity of graphs and features of weights. It is shown that the notion of {\em structural balance} for complex-weighted graphs plays a critical role for establishing the condition. To demonstrate the importance of the obtained condition, we apply the condition to consensus problems on complex-weighted graphs. A general notion of consensus, called {\em complex consensus}, is introduced, which means that all limiting values of the agents have
the same modulus. Some necessary and sufficient conditions for complex consensus are obtained. These complex consensus results extend and complement some existing ones including the standard consensus results~\cite{RB05} and bipartite consensus results~\cite{Altafini13}.

This paper makes the following contributions. 1) We extend the known results on complex Laplacains (see~\cite{Reff12}) to a general setting. 2) We establish general consensus results, which are shown to be useful in the study of distributed coordination of multi-agent systems in the plane such as circular formation and set surrounding control. In particular, our results supplement the bipartite consensus results in~\cite{Altafini13}.

The remainder of this paper is organized as follows. Section~\ref{section: complex Laplacians} discusses the properties of the complex Laplacian. Some multi-agent coordination control problems, based on the complex Laplacian, are investigated in Section~\ref{section: application}. Section~\ref{section: examples} presents some examples to illustrate our results.
This paper is concluded in Section~\ref{section: conclusion}.

The notation used in the paper is quite standard.
Let $\R$ be the field of real numbers and $\C$ the field of
complex numbers. For a complex matrix $A\in\C^{n\times n}$, $A^*$
denotes the conjugate transpose of $A$. We use $\bar{z}$ to denote
the complex conjugate of a complex number $z$. The modulus of $z$ is denoted by $|z|$. Let $\mathbf{1}\in\R^n$ be the $n$-dimensional column vector of ones. For $x=[x_1,\ldots,x_n]^T\in\C^n$, let $\|x\|_1$ be its $1$-norm, i.e., $\|x\|_1=\sum_{i=1}^n|x_i|$. Denote by $\T$ the unit circle, i.e., $\T=\{z\in\C:\ |z|=1\}$. It is easy to see that $\T$
is an abelian group under multiplication. For $\zeta=[\zeta_1,\ldots,\zeta_n]^T\in\T^n$, let $D_\zeta:=\mathrm{diag}(\zeta)$ denote the diagonal matrix with $i$th diagonal entry $\zeta_i$. Finally, we have ${\rm j}=\sqrt{-1}$.

\section{Complex-weighted graphs}\label{section: complex Laplacians}

In this section we present some interesting results on complex-weighted graphs. We believe that these results themselves are also interesting from the graph theory point of view. Before proceeding, we introduce some basic concepts of complex-weighted graphs.

\subsection{Preliminaries}
The digraph associated with a complex matrix $A=[a_{ij}]_{n\times n}$ is denoted by $\G(A)=(\V,\E)$, where $\V=\{1,\ldots,n\}$ is the vertex set and
$\E\subset\V\times \V$ is the edge set. An edge $(j,i)\in\E$, i.e., there exists an edge from $j$ to $i$ if and only if $a_{ij}\neq0$. The matrix $A$ is usually called the adjacency matrix of the digraph $\G(A)$.
Moreover, we assume that $a_{ii}=0$, for $i=1,\ldots,n$, i.e., $\G(A)$ has no self-loop. For easy reference, we say $\G(A)$ is complex, real and nonnegative if $A$ is complex, real and (real) nonnegative, respectively.
Let $\N_i$ be the neighbor set of
agent $i$, defined as $\N_i=\{j:\ a_{ij}\neq0\}$. A directed path in
$\G(A)$ from $i_1$ to $i_k$ is a sequence of distinct vertices
$i_1,\ldots,i_k$ such that $(i_l,i_{l+1})\in\E$ for
$l=1,\ldots,k-1$. A cycle is a path such that
the origin and terminus are the same.
The {\em weight} of a cycle is defined as the product of weights on
all its edges. A cycle is said to be {\em positive} if it has a positive weight.  The following definitions are used throughout this paper.
\begin{itemize}
\item[$\cdot$]
A digraph is said to be {\em (structurally) balanced} if all cycles are positive.
\item[$\cdot$]
A digraph has a directed spanning tree if there exists at
least one vertex (called a root) which has a directed path to all other vertices.
\item[$\cdot$]
A digraph is strongly connected if for any two distinct vertices $i$ and
$j$, there exists a directed path from $i$ to $j$.
\end{itemize}
For a strongly connected graph, it is clear that all vertices can serve as roots. We can see that being strongly connected is stronger than having a directed spanning tree and they are equivalent when $A$ is Hermitian.

For a complex digraph $\G(A)$, the complex Laplacian matrix $L=[l_{ij}]_{n\times n}$ of $\G(A)$ is defined by $L=D-A$ where $D=\mathrm{diag}(d_1,\ldots,d_n)$ is the modulus degree matrix of $\G(A)$ with $d_i=\sum_{j\in\N_i}|a_{ij}|$.
This definition appears in the literature on gain graphs (see, e.g., \cite{Reff12}), which can be thought as a generalization of standard Laplacian matrix of nonnegative graphs. We need the following definition on {\em switching equivalence} \cite{Reff12, Z89}.
\begin{defn}\label{defn: switching equivalent}
{\rm
Two graphs $\G(A_1)$ and $\G(A_2)$ are said to be {\em switching equivalent}, written as $\G(A_1)\sim\G(A_2)$, if there exists a vector $\zeta=[\zeta_1\ldots,\zeta_n]^T\in\T^n$ such that $A_2=D_\zeta^{-1} A_1D_\zeta$.}
\end{defn}
It is not difficult to see that the switching equivalence is an equivalence relation. We can see that switching equivalence preserves connectivity and balancedness.
We next investigate the
properties of eigenvalues of complex Laplacian $L$.

\subsection{Properties of the complex Laplacian}

For brevity, we say $A$ is {\em essentially nonnegative} if $\G(A)$ is switching equivalent to a graph with a nonnegative adjacency matrix. By definition, it is easy to see that $A$ is essentially nonnegative if and only if there exists a diagonal matrix $D_\zeta$ such that $D_\zeta^{-1} AD_\zeta$ is nonnegative. By the Ger\v{s}gorin disk theorem \cite[Theorem 6.1.1]{HJ87}, we see that all the eigenvalues of the Laplacian matrix $L$ of $A$ have nonnegative real parts and zero is the only possible eigenvalue with zero real part. We next further discuss the properties of eigenvalues of $L$ in terms of $\G(A)$.

\begin{lem}\label{lem: 1}
Zero is an eigenvalue of $L$ with an eigenvector $\zeta\in\T^n$ if and only if $A$ is essentially nonnegative.
\end{lem}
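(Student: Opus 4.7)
The plan is to treat the two directions separately, with the harder direction being the forward implication. Throughout, I will exploit the fact that $\zeta\in\T^n$ means $|\zeta_i|=1$, so $D_\zeta^{-1}=D_{\bar\zeta}$ commutes with every diagonal matrix and in particular with $D$.

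For the \emph{if} direction, suppose $A$ is essentially nonnegative, so there is $\zeta\in\T^n$ with $B:=D_\zeta^{-1}AD_\zeta$ nonnegative. The entries satisfy $|b_{ij}|=|a_{ij}|$, so the row sum $\sum_j b_{ij}$ equals $d_i$, i.e.\ the standard (real) degree of the nonnegative graph $\G(B)$ coincides with $d_i$. Hence the Laplacian of $B$ is $L_B=D-B=D_\zeta^{-1}(D-A)D_\zeta=D_\zeta^{-1}LD_\zeta$, using that diagonal matrices commute. Since $L_B$ is a standard real Laplacian, $L_B\mathbf{1}=0$, so $LD_\zeta\mathbf{1}=0$, which reads $L\zeta=0$.

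For the \emph{only if} direction, assume $L\zeta=0$ with $\zeta\in\T^n$. Writing this out at row $i$ gives
\begin{equation*}
\sum_{j\in\N_i}|a_{ij}|\,\zeta_i=\sum_{j\in\N_i}a_{ij}\zeta_j,
\end{equation*}
and after dividing by $\zeta_i$ (using $|\zeta_i|=1$, so $1/\zeta_i=\bar\zeta_i$),
\begin{equation*}
\sum_{j\in\N_i}\bar\zeta_i a_{ij}\zeta_j=\sum_{j\in\N_i}|a_{ij}|.
\end{equation*}
Each complex number $\bar\zeta_i a_{ij}\zeta_j$ has modulus exactly $|a_{ij}|$, so the left side is a sum of complex numbers whose moduli sum to the right side, which is already real and equals the sum of those moduli. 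This is the equality case of the triangle inequality, which forces each term to be a nonnegative real with its modulus, i.e.\ $\bar\zeta_i a_{ij}\zeta_j=|a_{ij}|\ge 0$ for all $j\in\N_i$ (and trivially $0$ otherwise). Therefore every entry of $D_\zeta^{-1}AD_\zeta$ is nonnegative, so by definition $A$ is essentially nonnegative with the same witness $\zeta$.

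The main obstacle, and the one genuinely nontrivial observation, is recognizing the row equation as a saturated triangle inequality: this is what converts the algebraic identity $L\zeta=0$ into the entrywise nonnegativity of $D_\zeta^{-1}AD_\zeta$. Once that is in hand, both directions collapse to the commutation of diagonal matrices together with the fact that $\mathbf{1}$ lies in the kernel of the Laplacian of any nonnegative graph.
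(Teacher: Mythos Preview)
Your proof is correct and follows essentially the same route as the paper: both directions hinge on the similarity $D_\zeta^{-1}LD_\zeta$ being the Laplacian of $D_\zeta^{-1}AD_\zeta$, and your triangle-inequality equality argument is precisely the detailed verification that the paper abbreviates as ``expanding in component form, we can verify that $D_\zeta^{-1}LD_\zeta$ has nonpositive off-diagonal entries.'' In fact your write-up is slightly more explicit than the paper's at that step.
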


\begin{proof}
(Sufficiency) Assume that $A$ is essentially nonnegative. That is, there exists a diagonal matrix $D_\zeta$ such that $A_1=D_\zeta^{-1}AD_\zeta$ is nonnegative.
Let $L_1$ be the Laplacian matrix of the nonnegative matrix $A_1$ and thus $L_1\mathbf{1}=0$. A simple observation shows that these two Laplacian matrices are similar, i.e., $L_1=D_\zeta^{-1}LD_\zeta$. Therefore, $L\zeta=0$.

(Necessity) Let $L\zeta=0$ with $\zeta\in\T^n$. Then we have $LD_\zeta\mathbf{1}=0$ and so $D_\zeta^{-1}LD_\zeta\mathbf{1}=0$. Expanding the equation $D_\zeta^{-1}LD_\zeta\mathbf{1}=0$ in component form, we can verify that $D_\zeta^{-1}LD_\zeta\in\R^{n\times n}$ has nonpositive off-diagonal entries. This implies that $A_1=D_\zeta^{-1}AD_\zeta$ is nonnegative and thus $A$ is essentially nonnegative.
\end{proof}

If we take the connectedness into account, then we can derive a stronger result.

\begin{prop}\label{prop: 2}
Zero is a simple eigenvalue of $L$ with an eigenvector $\xi\in\T^n$ if and only if $A$ is essentially nonnegative and $\G(A)$ has a spanning tree.
\end{prop}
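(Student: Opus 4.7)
The plan is to reduce to the classical case of nonnegative adjacency matrices by exploiting switching equivalence, and then invoke the well-known characterization from the real-weighted consensus literature (e.g.,~\cite{RB05}) which says that zero is a simple eigenvalue of the Laplacian of a nonnegative digraph if and only if the digraph has a directed spanning tree. Lemma~\ref{lem: 1} already handles the ``zero is an eigenvalue'' half; Proposition~\ref{prop: 2} just adds the simplicity/spanning-tree layer on top. Two structural facts from the preliminaries will do all the heavy lifting: (i) switching equivalence preserves the spectrum because $L_1 = D_\zeta^{-1} L D_\zeta$ is a similarity, and (ii) switching equivalence preserves the digraph structure (vertices and edge set, hence the existence of a spanning tree), since conjugation by $D_\zeta$ does not alter the support of $A$.

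For sufficiency, I would start from $A$ essentially nonnegative and pick $D_\zeta$, $\zeta\in\T^n$, so that $A_1 = D_\zeta^{-1} A D_\zeta$ is nonnegative. As in the proof of Lemma~\ref{lem: 1}, the associated Laplacians satisfy $L_1 = D_\zeta^{-1} L D_\zeta$, so $L$ and $L_1$ have the same spectrum with the same algebraic (and geometric) multiplicities. Because $\G(A)$ has a spanning tree and switching equivalence preserves connectivity, $\G(A_1)$ also has a spanning tree. The classical result for nonnegative digraph Laplacians then yields that zero is a simple eigenvalue of $L_1$, with eigenvector $\mathbf{1}$. Transporting back, $L\xi = 0$ with $\xi = D_\zeta \mathbf{1} = \zeta \in \T^n$, and zero remains simple for $L$.

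For necessity, suppose $L\xi = 0$ with $\xi\in\T^n$ and zero is simple. Lemma~\ref{lem: 1} (necessity direction) immediately gives that $A_1 := D_\xi^{-1} A D_\xi$ is nonnegative, so $A$ is essentially nonnegative. The Laplacian $L_1$ of $A_1$ is similar to $L$ via $L_1 = D_\xi^{-1} L D_\xi$, so zero is also a simple eigenvalue of $L_1$. Applying the standard nonnegative-Laplacian characterization in the reverse direction, $\G(A_1)$ must have a directed spanning tree, and since $\G(A)\sim\G(A_1)$ shares the same underlying digraph, $\G(A)$ has a directed spanning tree as well.

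I don't anticipate a real obstacle: the only nontrivial ingredient is the classical nonnegative-case theorem, which is invoked as a black box in both directions, and the rest is bookkeeping with the similarity $L_1 = D_\zeta^{-1} L D_\zeta$. The one subtle point to be careful about is making sure that the eigenvector $\xi\in\T^n$ supplied in the hypothesis is the same $\zeta$ used to produce the nonnegative representative $A_1$; this is automatic from Lemma~\ref{lem: 1}, but it deserves an explicit line so that the ``eigenvector in $\T^n$'' feature is not lost in translation.
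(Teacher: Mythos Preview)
Your proposal is correct and follows essentially the same route as the paper: both arguments reduce to the classical nonnegative-Laplacian characterization via the similarity $L_1 = D_\zeta^{-1} L D_\zeta$, using that switching equivalence preserves both the spectrum and the underlying digraph (hence the spanning-tree property). The paper packages this as a chain of four equivalent conditions and cites \cite[Lemma~3.1]{Ren07} for the nonnegative step, but the logical content is identical to what you wrote.
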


\begin{proof}
The proof follows from a sequence of equivalences:
$$
(1)\Leftrightarrow(2)\Leftrightarrow(3)\Leftrightarrow(4).
$$
Conditions (1)-(4) are given in the following.
\begin{itemize}
\item[$(1)$]
$A$ is essentially nonnegative and $\G(A)$ has a spanning tree.
\item[$(2)$]
There exists a diagonal matrix $D_\zeta$ such that $A_1=D_\zeta^{-1}AD_\zeta$ is nonnegative and $\G(A_1)$ has a spanning tree.
\item[$(3)$]
There exists a diagonal matrix $D_\zeta$ such that $L_1=D_\zeta^{-1}LD_\zeta$ has a simple zero eigenvalue with an eigenvector being $\mathbf{1}$.
\item[$(4)$]
$L$ has a simple zero eigenvalue with an eigenvector $\zeta\in\T^n$.
\end{itemize}
Here, the second one is from \cite[Lemma 3.1]{Ren07} and the last one follows from the similarity.
\end{proof}

Here a key issue is how to verify the essential nonnegativity of $A$. Thanks to the concept of balancedness of digraphs, we can derive a necessary and sufficient condition for $A$ to be essentially nonnegative.
To this end, for a complex matrix $A$, we denote by $A_H=(A+A^*)/2$ the Hermitian part of $A$. Clearly, we have $A=A_H$ when $A$ is Hermitian.

\begin{prop}\label{prop: 1}
The complex matrix $A=[a_{ij}]_{n\times n}$ is essentially nonnegative if and only if $\G(A_H)$ is balanced and $a_{ij}a_{ji}\geq0$ for all $1\leq i,j\leq n$.
\end{prop}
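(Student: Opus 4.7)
The plan is to prove both implications separately, with the sufficiency direction carrying the real content.

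For necessity, suppose $B := D_\zeta^{-1} A D_\zeta \ge 0$ entrywise for some $\zeta \in \T^n$. Since each $\zeta_i$ lies on the unit circle, one checks $D_\zeta^{-1} A^* D_\zeta = B^T$ is also entrywise nonnegative, and averaging gives $D_\zeta^{-1} A_H D_\zeta = (B+B^T)/2 \ge 0$. Thus $\G(A_H)$ is switching equivalent to a nonnegative graph; every such graph is trivially balanced (each cycle weight is a product of positive reals), and switching equivalence preserves balancedness as noted in the text, so $\G(A_H)$ is balanced. For the pairwise product, multiplying the nonnegative reals $\bar\zeta_i a_{ij}\zeta_j$ and $\bar\zeta_j a_{ji}\zeta_i$ yields $|\zeta_i|^2|\zeta_j|^2 a_{ij}a_{ji} = a_{ij}a_{ji} \ge 0$.

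For sufficiency, the idea is first to build $\zeta \in \T^n$ gauging $A_H$ to a nonnegative matrix, and then to argue that the \emph{same} $\zeta$ also gauges $A$ to a nonnegative matrix. The bridge between $A_H$ and $A$ is the following observation: whenever $a_{ij}\neq 0$, the hypothesis $a_{ij}a_{ji}\ge 0$ forces either $a_{ji}=0$ or $\bar a_{ji}$ to be a positive real multiple of $a_{ij}$; in either case $(A_H)_{ij} = (a_{ij}+\bar a_{ji})/2$ is itself a positive real multiple of $a_{ij}$. Consequently, once $D_\zeta^{-1}A_H D_\zeta\ge 0$ is established, each nonzero entry $\bar\zeta_i a_{ij}\zeta_j$ is a positive real multiple of $\bar\zeta_i (A_H)_{ij}\zeta_j\ge 0$, so $D_\zeta^{-1}AD_\zeta\ge 0$ as well.

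The core construction is to work on each weakly connected component of $\G(A_H)$ separately. In such a component, fix a spanning tree and a root $r$, set $\zeta_r = 1$, and for each tree edge from $v$ to $v'$ propagate by $\zeta_{v'} := \zeta_v (A_H)_{vv'}/|(A_H)_{vv'}|$, so that $\bar\zeta_v (A_H)_{vv'}\zeta_{v'} = |(A_H)_{vv'}|>0$ along every tree edge; on isolated vertices set $\zeta_i = 1$. The remaining task, and the main obstacle, is the verification for non-tree edges $(i,j)$: close $(i,j)$ with the unique tree paths from $r$ to $i$ and from $r$ to $j$ to form a cycle in $\G(A_H)$; the balanced hypothesis forces its weight to be a positive real, and since the gauged contributions along the tree edges are already positive reals, the closing edge must also contribute a positive real after gauging, i.e., $\bar\zeta_i(A_H)_{ij}\zeta_j\ge 0$. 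This monodromy-around-cycles step is where the structural balance of $\G(A_H)$ is consumed in an essential way, and it is where I expect most of the technical care to be needed; the rest of the argument is bookkeeping.
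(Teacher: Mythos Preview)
Your approach is essentially the paper's: reduce to showing that the \emph{same} gauge $D_\zeta$ works for $A$ and for $A_H$, via the observation that $(A_H)_{ij}$ is a positive real multiple of $a_{ij}$ whenever $a_{ij}\neq 0$ (this is exactly the paper's relation between $\zeta_i^{-1}(A_H)_{ij}\zeta_j$ and $\zeta_i^{-1}a_{ij}\zeta_j$). The only structural difference is that the paper invokes Zaslavsky's result to pass from ``$\G(A_H)$ balanced'' to ``$A_H$ essentially nonnegative'', whereas you spell out the standard spanning-tree gauge construction yourself; that is a matter of packaging, not of strategy.

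Two small corrections. First, in the necessity direction, $D_\zeta^{-1}A^*D_\zeta$ equals $B^*$, not $B^T$ a priori; it becomes $B^T$ only because $B$, being entrywise nonnegative, is real --- worth saying explicitly. Second, your propagation rule has a missing conjugate: with $\zeta_{v'}=\zeta_v\,(A_H)_{vv'}/|(A_H)_{vv'}|$ one computes $\bar\zeta_v(A_H)_{vv'}\zeta_{v'}=(A_H)_{vv'}^2/|(A_H)_{vv'}|$, which need not be real. The formula that achieves $\bar\zeta_v(A_H)_{vv'}\zeta_{v'}=|(A_H)_{vv'}|$ is $\zeta_{v'}=\zeta_v\,\overline{(A_H)_{vv'}}/|(A_H)_{vv'}|$. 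Also, for the non-tree edge step, the object you describe (tree path $r\to i$, edge $i\to j$, reverse tree path $j\to r$) is a closed walk rather than a cycle in the paper's sense; either replace it by the unique tree path from $j$ to $i$ closed by the edge $(i,j)$, or note that in the Hermitian graph any closed walk decomposes into cycles and hence inherits positive weight from balance. With these fixes the monodromy argument goes through as you outline.
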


\begin{proof}
Since $A_H$ is Hermitian, it follows from~\cite{Z89} that $\G(A_H)$ is balanced if and only if $A_H$ is essentially nonnegative. Therefore, to complete the proof, we next show that $A$ is essentially nonnegative if and only if $A_H$ is essentially nonnegative and $a_{ij}a_{ji}\geq0$ for all $1\leq i,j\leq n$.

{\em Sufficiency:} By the condition that $a_{ij}a_{ji}\geq0$, we have that $|a_{ij}a_{ji}|=\bar{a}_{ij} \bar{a}_{ji}$. Multiplying both sides by $a_{ij}$, we obtain that $|a_{ji}|a_{ij}=|a_{ij}|\bar{a}_{ji}$. Consequently, for a diagonal matrix $D_\zeta$ with $\zeta=[\zeta_1,\ldots,\zeta_n]^T\in\T^n$, we have for $a_{ij}\neq0$
\begin{equation}\label{eq: relation}
\zeta_i^{-1}\frac{a_{ij}+\bar{a}_{ji}}{2}\zeta_j
=\frac{1+\frac{|a_{ji}|}{|a_{ij}|}}{2}\zeta_i^{-1}a_{ij}\zeta_j.
\end{equation}
It thus follows that $D_\zeta^{-1}A_HD_\zeta$ being nonnegative implies $D_\zeta^{-1}AD_\zeta$ being nonnegative, which proves the sufficiency.

{\em Necessity:} Now assume that $A$ is essentially nonnegative. That is, there exists a diagonal matrix $D_\zeta$ such that $D_\zeta^{-1}AD_\zeta$ is nonnegative. Then we have
$$
a_{ij}a_{ji}=(\zeta_i^{-1}a_{ij}\zeta_j)(\zeta_j^{-1}a_{ji}\zeta_i)\geq0
$$
from which we know that relation~\eqref{eq: relation} follows. This implies that $D_\zeta^{-1}A_HD_\zeta$ is nonnegative. This concludes the proof.
\end{proof}

The above proposition deals with the balancedness of $\G(A_H)$, instead of $\G(A)$ itself. The reason is that $\G(A)$ being balanced is not a sufficient condition for $A$ being essentially nonnegative, as shown in the following example.

\begin{exa}
{\rm Consider the complex matrix $A$ given by
$$
A=\begin{bmatrix}
    0 & 2 & 0 \\
    1 & 0 & 0 \\
    -\mathrm{j} & \mathrm{j} & 0 \\
\end{bmatrix}.
$$
It is straightforward that $\G(A)$ only has a positive cycle of length two and thus is balanced. However, we can check that $A$ is not essentially nonnegative.}
\end{exa}

The following theorem is a combination of Propositions~\ref{prop: 2} and \ref{prop: 1}.
\begin{thm}
Zero is a simple eigenvalue of $L$ with an eigenvector $\xi\in\T^n$ if and only if $\G(A)$ has a spanning tree, $\G(A_H)$ is balanced and $a_{ij}a_{ji}\geq0$ for all $1\leq i,j\leq n$.
\end{thm}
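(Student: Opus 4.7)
The plan is simply to chain together the two previously established propositions, each of which has already done the real work. First I would invoke Proposition~\ref{prop: 2}, which translates the spectral hypothesis into structural information about $A$ and its digraph: zero being a simple eigenvalue of $L$ with an eigenvector in $\T^n$ is equivalent to $A$ being essentially nonnegative together with $\G(A)$ having a directed spanning tree.

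Second, I would substitute into the first equivalence the characterization of essential nonnegativity supplied by Proposition~\ref{prop: 1}. That proposition asserts that $A$ is essentially nonnegative precisely when $\G(A_H)$ is balanced and $a_{ij}a_{ji} \ge 0$ for all $1 \le i,j \le n$. Replacing the phrase \emph{$A$ is essentially nonnegative} in the conclusion of Proposition~\ref{prop: 2} by this pair of conditions yields exactly the three hypotheses listed in the theorem, with \emph{iff} preserved throughout since both steps are logical equivalences.

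There is no real obstacle to anticipate, because the substantive content is already packaged in Propositions~\ref{prop: 2} and~\ref{prop: 1}: the former relies on Lemma~\ref{lem: 1} together with the classical fact that a nonnegative weighted digraph has a simple zero Laplacian eigenvalue iff it contains a directed spanning tree, while the latter isolates balancedness of the Hermitian part $A_H$ (plus the pairwise sign condition $a_{ij}a_{ji}\ge 0$) as the correct generalisation of the signed-graph criterion of~\cite{Z89}. Composing two known equivalences is purely propositional, so the proof collapses to one line naming the two applications and observing that the conjunctions line up.
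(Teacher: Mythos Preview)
Your proposal is correct and mirrors the paper's own justification exactly: the theorem is stated as a direct combination of Propositions~\ref{prop: 2} and~\ref{prop: 1}, with no additional argument given. Your outline of chaining the two equivalences is precisely what the paper intends.
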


We next turn our attention to the case that $A$ is not essentially nonnegative. When $\G(A)$ has a spanning tree and $A$ is not essentially nonnegative, what we can only obtain from Proposition~\ref{prop: 2} is that either zero is not an eigenvalue of $L$, or zero is an eigenvalue of $L$ with no associated eigenvector in $\T^n$. To provide further understanding, we here consider the special case that $A$ is Hermitian. In this case, $L$ is also Hermitian.
Then all eigenvalues of $L$ are real. Let $\lambda_1\leq\lambda_2\leq\cdots\leq\lambda_n$ be the
eigenvalues of $L$. The positive semidefiniteness of $L$, i.e., the fact that $\lambda_1\geq0$, can be obtained by the following observation. For $z=[z_1,\ldots,z_n]^T\in\C^n$, we have
\begin{equation}\label{eq: positivity of L}
\begin{split}
z^*Lz&=\sum_{i=1}^n\bar{z}_i\left(\sum_{j\in\N_i}|a_{ij}|z_i-\sum_{j\in\N_i}a_{ij}z_j\right)\\
&=\frac{1}{2}\sum_{(j,i)\in\E}\left(|a_{ij}||z_i|^2+|a_{ij}||z_j|^2-2a_{ij}\bar{z}_iz_j\right)\\
&=\frac{1}{2}\sum_{(j,i)\in\E}|a_{ij}|\left|z_i-\varphi(a_{ij})z_j\right|^2\\
\end{split}
\end{equation}
where $\varphi:\ \C\backslash\{0\}\rightarrow\T$ is defined by
$\varphi(a_{ij})=\frac{a_{ij}}{|a_{ij}|}$. Based on \eqref{eq: positivity of L}, we have the following lemma.

\begin{lem}\label{lem: 2}
Let $A$ be Hermitian. Assume that $\G(A)$ has a spanning tree. Then $L$ is positive definite, i.e., $\lambda_1>0$, if and only if $A$ is not essentially nonnegative.
\end{lem}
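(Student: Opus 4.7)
The plan is to prove the two directions via the contrapositive for one and directly for the other, leaning on the sum-of-squares formula \eqref{eq: positivity of L} together with Lemma~\ref{lem: 1}.

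For the easy direction, I would assume $A$ is essentially nonnegative. Then by Lemma~\ref{lem: 1} (or Proposition~\ref{prop: 2}) zero is an eigenvalue of $L$, so $\lambda_1=0$ and $L$ is not positive definite. This handles ``$L$ positive definite $\Rightarrow$ $A$ not essentially nonnegative.''

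The interesting direction is the converse. I would argue by contrapositive: assume $L$ is not positive definite and conclude $A$ is essentially nonnegative. Since $L$ is already known to be positive semidefinite, $\lambda_1=0$, so there exists a nonzero $z\in\C^n$ with $Lz=0$, and hence $z^*Lz=0$. Plugging into the identity \eqref{eq: positivity of L}, every term of the sum must vanish, so
\[
z_i=\varphi(a_{ij})\,z_j\qquad \text{for every } (j,i)\in\E.
\]
Taking moduli and using $|\varphi(a_{ij})|=1$ gives $|z_i|=|z_j|$ along every edge. Because $A$ is Hermitian, the digraph $\G(A)$ is symmetric (an edge $(j,i)$ forces the reverse edge $(i,j)$), so the existence of a directed spanning tree implies the underlying undirected graph is connected. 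Propagating along any path then yields $|z_i|=|z_j|$ for all $i,j$. Since $z\neq 0$, the common modulus is positive; dividing by it produces an eigenvector $\zeta\in\T^n$ of $L$ at the eigenvalue $0$. Lemma~\ref{lem: 1} now applies and gives that $A$ is essentially nonnegative.

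The main technical point, and the only place where spanning-tree connectivity and the Hermitian hypothesis interact, is the step that turns the edgewise equalities $|z_i|=|z_j|$ into the global equality of all moduli. I do not expect a real obstacle: once one notes that Hermiticity makes the neighborhood relation symmetric so that the spanning-tree assumption reduces to connectivity of the underlying graph, the propagation argument is immediate. The rest of the proof is an essentially mechanical application of the two tools already set up in the paper, namely the sum-of-squares formula and Lemma~\ref{lem: 1}.
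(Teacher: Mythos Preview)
Your proposal is correct and follows essentially the same route as the paper's proof: both directions are handled identically, with the substantive one proceeding by contraposition/contradiction from a nonzero null vector, invoking the sum-of-squares identity \eqref{eq: positivity of L} to force equal moduli along edges, using the Hermitian hypothesis to upgrade the spanning-tree assumption to full connectivity, normalizing to obtain an eigenvector in $\T^n$, and then applying Lemma~\ref{lem: 1}. The only cosmetic difference is that the paper phrases the connectivity step as ``having a spanning tree is equivalent to strong connectivity when $A$ is Hermitian,'' whereas you phrase it via symmetry of the edge relation; these are the same observation.
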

\begin{proof}
We only show the sufficiency since the necessity follows directly from Proposition~\ref{prop: 2}. Assume the contrary. Then there exists a nonzero vector $y=[y_1,\ldots,y_n]^T\in\C^n$ such that $Ly=0$.
By \eqref{eq: positivity of L},

\begin{equation*}
\begin{split}
y^*Ly=\frac{1}{2}\sum_{(j,i)\in\E}|a_{ij}|\left|y_i-\frac{a_{ij}}{|a_{ij}|}y_j\right|^2=0.
\end{split}
\end{equation*}
This implies that $y_i=\frac{a_{ij}}{|a_{ij}|}y_j$ for $(j,i)\in\E$ and so $|y_i|=|y_j|$ for $(j,i)\in\E$. Note that for $\G(A)$ with $A$ being Hermitian, having a spanning tree is equivalent to the strong connectivity. Then we conclude that $|y_i|=|y_j|$ for all $i,j=1,\ldots,n$.
 Without loss of generality, we assume that $y\in\T^n$. It follows from Lemma~\ref{lem: 1} that $A$ is essentially nonnegative, a contradiction.
\end{proof}

On the other hand, for the general case that $A$ is not Hermitian, we cannot conclude that $L$ has no zero eigenvalue when $\G(A)$ has a spanning tree and $A$ is not essentially nonnegative. Example~\ref{exa: 1} in Section~\ref{section: examples} provides such an example.

\section{Applications}\label{section: application}
In this section, we study the distributed coordination problems with the results established in Section~\ref{section: complex Laplacians}. We first consider the consensus problems on complex-weighted digraphs.

\subsection{Complex consensus}
For a group of $n$ agents, we consider the continuous-time (CT) consensus protocol over complex field
\begin{equation}\label{eq: Algorithm}
\dot{z}_i(t)=u_i(t),\ t\geq0
\end{equation}
where $z_i(t)\in\C$ and $u_i(t)\in\C$ are the state and input of agent
$i$, respectively. We also consider the corresponding discrete-time (DT) protocol over complex field
\begin{equation}\label{eq: discrete algorithm}
z_i(k+1)=z_i(k)+u_i(k),\ k=0,1,\ldots.
\end{equation}
The communications between agents are modeled as
a complex graph $\G(A)$. The control input $u_i$  is designed, in a distributed way, as
$$
u_i=-\kappa\sum_{j\in\N_i}(|a_{ij}|z_i-a_{ij}z_j),
$$
where $\kappa>0$ is a fixed control gain. Then we have the following two systems described as
\begin{equation*}
\dot{z}_i(t)=-\kappa\sum_{j\in\N_i}(|a_{ij}|z_i-a_{ij}z_j)
\end{equation*}
and
\begin{equation*}
z_i(k+1)=z_i(k)-\kappa\sum_{j\in\N_i}(|a_{ij}|z_i-a_{ij}z_j).
\end{equation*}
Denote by $z=(z_1,\ldots,z_n)^T\in\C^n$ the aggregate position vector of $n$ agents. With the Laplacain matrix $L$ of $\G(A)$, these two systems can be rewritten in more compact forms:
\begin{equation}\label{eq: Algorithm with complex L}
\dot{z}(t)=-\kappa Lz(t)
\end{equation}
in the CT case and
\begin{equation}\label{eq: discrete algorithm with complex L}
z(k+1)=z(k)-\kappa Lz(k)
\end{equation}
in the DT case.
Inspired by the consensus in real-weighted networks \cite{Altafini13, OSM04, OSFM07}, we introduce the following definition.
\begin{defn}\label{defn: modulus consensus}
{\rm We say that the CT system \eqref{eq: Algorithm with complex L} (or the DT system~\eqref{eq: discrete algorithm with complex L}) reaches the {\em complex consensus} if $\lim_{t\rightarrow\infty}|z_i(t)|=a>0$ (or $\lim_{k\rightarrow\infty}|z_i(k)|=a>0$) for $i=1,\ldots,n$.}
\end{defn}

The following is useful in simplifying the statement of complex consensus results.
Let $A$ be an essentially nonnegative complex matrix.
If $\G(A)$ has a spanning tree, then it follows from Proposition~\ref{prop: 2} that $L$ has a simple eigenvalue at zero with an associated eigenvector $\zeta\in\T^n$. Thus, we have $A_1=D_\zeta^{-1}AD_\zeta$ is nonnegative and $D_\zeta^{-1}LD_\zeta$ has a simple eigenvalue at zero with an associated eigenvector $\mathbf{1}$.
In the standard consensus theory~\cite{RBA07}, it is well-known that $D_\zeta^{-1}LD_\zeta$ has a nonnegative left eigenvector $\nu=[\nu_1,\ldots,\nu_n]^T$ corresponding to eigenvalue zero, i.e., $\nu^T(D_\zeta^{-1}LD_\zeta)=0$ and $\nu_i\geq0$ for $i=1,\ldots,n$. We assume that $\|\nu\|_1=1$. Letting $\eta=D_\zeta^{-1}\nu=[\eta_1,\ldots,\eta_n]^T$, we have $\|\eta\|_1=1$ and $\eta^TL=0$.
We first state a necessary and sufficient condition for complex consensus of the CT system~\eqref{eq: Algorithm with complex L}.

\begin{thm}\label{thm: 2}
The CT system \eqref{eq: Algorithm with complex L} reaches complex consensus if and only if $A$ is essentially nonnegative and $\G(A)$ has a spanning tree. In this case, we have
$$
\lim_{t\rightarrow\infty}z(t)=(\eta^Tz(0))\zeta.
$$
\end{thm}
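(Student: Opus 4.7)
The plan is to reduce the sufficiency direction to the classical real-weighted Laplacian consensus result via the similarity supplied by Proposition~\ref{prop: 2}, and then to peel off the two structural conditions in the necessity direction by analysing the limit of the trajectory.

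For sufficiency, assume $A$ is essentially nonnegative and $\G(A)$ has a spanning tree. Proposition~\ref{prop: 2} furnishes $\zeta\in\T^n$ with $A_1=D_\zeta^{-1}AD_\zeta\geq 0$ and $L_1=D_\zeta^{-1}LD_\zeta$ the real Laplacian of $A_1$, having a simple zero eigenvalue with eigenvector $\mathbf{1}$. The change of variable $w(t):=D_\zeta^{-1}z(t)$ converts \eqref{eq: Algorithm with complex L} into $\dot w=-\kappa L_1 w$, the standard real consensus dynamics on a graph possessing a spanning tree. Classical consensus theory then gives $w(t)\to(\nu^T w(0))\mathbf{1}$ where $\nu^T L_1=0$, $\nu\geq 0$, $\|\nu\|_1=1$. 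Unwinding the transformation produces $z(t)\to(\nu^T D_\zeta^{-1}z(0))D_\zeta\mathbf{1}=(\eta^T z(0))\zeta$ with $\eta=D_\zeta^{-1}\nu$, so $|z_i(t)|\to|\eta^T z(0)|$ for every $i$ because $|\zeta_i|=1$; this is complex consensus with $a=|\eta^T z(0)|$, positive outside the exceptional hyperplane $\{z(0):\eta^T z(0)=0\}$.

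For necessity, assume complex consensus holds, so $v:=\lim_{t\to\infty}z(t)$ exists with $|v_i|=a>0$. Passing to the limit in \eqref{eq: Algorithm with complex L} gives $Lv=0$, and $v/a\in\T^n$ is a unimodular null vector of $L$; Lemma~\ref{lem: 1} therefore makes $A$ essentially nonnegative. To activate Proposition~\ref{prop: 2} and deduce a spanning tree, I must show that zero is a simple eigenvalue of $L$. The Gersgorin observation already recorded in the paper implies that zero is the only purely imaginary eigenvalue of $L$, so convergence of $e^{-\kappa L t}$ for all initial states forces zero to be semisimple. If its multiplicity were at least two, two independent null vectors $u,w\in\ker L$ would combine into limit states $\lambda u+\mu w$ with entries of unequal modulus, contradicting complex consensus; hence the multiplicity is one.

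The step I expect to be the main obstacle is this last dimension argument, namely that the nonlinear set $S:=\{v\in\C^n:|v_1|=\cdots=|v_n|\}$ cannot contain a complex subspace of dimension two. I would dispatch it by the direct calculation $|\lambda u_i+\mu w_i|^2=|\lambda|^2|u_i|^2+|\mu|^2|w_i|^2+2\,\mathrm{Re}(\lambda\bar\mu u_i\bar w_i)$: for this to be $i$-independent for every $(\lambda,\mu)$ one needs $|u_i|$, $|w_i|$, and $u_i\bar w_i$ all constant in $i$, which (given that $|u_i|$ is a nonzero constant) forces $w$ to be a scalar multiple of $u$. Once simplicity of zero is established, Proposition~\ref{prop: 2} delivers the spanning-tree condition, and the explicit limit formula is the one already produced in the sufficiency step.
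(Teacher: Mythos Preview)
Your sufficiency argument is essentially identical to the paper's: change variables via $D_\zeta$, reduce to the classical nonnegative Laplacian, and quote the standard consensus limit.

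In the necessity direction you take a genuinely different route. The paper first argues graph-theoretically that a spanning tree must exist: it picks a maximal subtree $T_1$, observes that no edge can enter the vertices outside $T_1$ from the root side, and concludes that those vertices cannot agree in modulus with the root. Only afterwards does it use the equal-modulus limit to produce a unimodular null vector and invoke Lemma~\ref{lem: 1}. You instead stay on the spectral side throughout: from the limit you extract a vector $v/a\in\T^n$ in $\ker L$ (giving essential nonnegativity via Lemma~\ref{lem: 1}), then argue that zero must be semisimple and in fact simple because the ``equal-modulus'' set $S$ cannot contain a two-dimensional complex subspace, and finally read off the spanning-tree condition from the converse direction of Proposition~\ref{prop: 2}. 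Your linear-algebra lemma that $S$ contains no complex plane is correct and is exactly the content the paper is implicitly using when it asserts that ``solutions of $Ly=0$ always have $|y_i|=|y_j|$'' forces an eigenvector in $\T^n$. What your approach buys is a uniform spectral treatment that never touches the combinatorics of subtrees; what the paper's approach buys is a self-contained graph argument that does not rely on the equivalence in Proposition~\ref{prop: 2} running in the reverse direction.

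One presentational point: you assert $v=\lim_{t\to\infty}z(t)$ exists before you have established semisimplicity of the zero eigenvalue, but convergence of $z(t)$ is a consequence of that semisimplicity (together with the Ger\v{s}gorin fact that all other eigenvalues lie in the open right half-plane for $L$). Reordering so that boundedness of $|z_i(t)|$ first forces semisimplicity, hence convergence of $z(t)$, and only then defining $v$, would make the logic linear. This is cosmetic; all the ingredients are present.
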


\begin{proof}
Assume that $A$ is essentially nonnegative and $\G(A)$ has a spanning tree. By Proposition~\ref{prop: 2}, we have $L$ has a simple eigenvalue at zero with an associated eigenvector $\zeta\in\T^n$. Thus, we conclude that $A_1=D_\zeta^{-1}AD_\zeta$ is nonnegative and $D_\zeta^{-1}LD_\zeta$ has a simple eigenvalue at zero with an eigenvector $\mathbf{1}$. Let $z=D_\zeta x$. By system \eqref{eq: Algorithm with complex L}, we can see that $x$ satisfies the system
\begin{equation*}
\dot{x}=-\kappa D_\zeta^{-1}LD_\zeta x.
\end{equation*}
Note that this is the standard consensus problem. From~\cite{RBA07}, it follows that
$$
\lim_{t\rightarrow\infty}x(t)=\nu^Tx(0)\mathbf{1}=\nu^TD_\zeta^{-1}z(0)\mathbf{1}.
$$
This is equivalent to
$$
\lim_{t\rightarrow\infty}z(t)=(\nu^TD_\zeta^{-1}z(0))D_\zeta\mathbf{1}=
(\eta^Tz(0))\zeta.
$$

To show the other direction,
we now assume that the system \eqref{eq: Algorithm with complex L} reaches complex consensus but $\G(A)$ does not have a spanning tree. Let $T_1$ be a maximal subtree of $\G$. Note that $T_1$ is a spanning tree of subgraph $\G_1$ of $\G(A)$. Denote by $\G_2$ the subgraph induced by vertices not belonging to $\G_1$. It is easy to see that there does not exist edge from $\G_1$ to $\G_2$ since otherwise $T_1$ is not a maximal subtree. All possible edges between $\G_1$ and $\G_2$ are from $\G_2$ to $\G_1$, and moreover we can see that there is no directed path from a vertex in $\G_2$ to the root of $T_1$ by $T_1$ being a maximal subtree again.
Therefore it is impossible to reach the complex consensus between the root of $T_1$ and vertices of $\G_2$. This implies that the system \eqref{eq: Algorithm with complex L} cannot reach complex consensus.
We obtain a contradiction. Hence $\G(A)$ have a spanning tree.
On the other hand, since the system \eqref{eq: Algorithm with complex L} reaches complex consensus we can see that the solutions $y=[y_1,\ldots,y_n]^T$ of the equation $Ly=0$ always have the property $|y_i|=|y_j|$ for all $i,j=1,\ldots,n$. Namely, zero is an eigenvalue of $L$ with an eigenvector $\zeta\in\T^n$.
It thus follows from Lemma~\ref{lem: 1} that $A$ is essentially nonnegative. We complete the proof of Theorem \ref{thm: 2}.
\end{proof}

For $\G(A)$, define the maximum modulus degree $\Delta$ by $\Delta=\max_{1\leq i\leq n}d_i$. We are now in a position to state the complex consensus result for the DT system~\eqref{eq: discrete algorithm with complex L}.

\begin{thm}\label{thm: discrete version of 2}
Assume that the input gain $\kappa$ is such that $0<\kappa<1/\Delta$.
Then the DT system \eqref{eq: discrete algorithm with complex L} reaches complex consensus if and only if $A$ is essentially nonnegative and $\G(A)$ has a spanning tree. In this case, we have
$$
\lim_{k\rightarrow\infty}z(k)=(\eta^Tz(0))\zeta.
$$
\end{thm}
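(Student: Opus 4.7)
The plan is to mimic the proof of the continuous-time Theorem~\ref{thm: 2}, exploiting the same similarity transformation $z=D_\zeta x$ to reduce the problem to a standard discrete-time real consensus problem, where classical Perron--Frobenius/SIA arguments are available.

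For the \emph{sufficiency} direction, I assume that $A$ is essentially nonnegative and $\G(A)$ has a spanning tree. By Proposition~\ref{prop: 2}, there exists $\zeta\in\T^n$ such that $L\zeta=0$, and $L_1:=D_\zeta^{-1}LD_\zeta$ is the real Laplacian of the nonnegative graph $\G(A_1)$ with $A_1=D_\zeta^{-1}AD_\zeta$, which still has a spanning tree. Setting $x(k)=D_\zeta^{-1}z(k)$, the DT system \eqref{eq: discrete algorithm with complex L} becomes
\begin{equation*}
x(k+1)=(I-\kappa L_1)x(k).
\end{equation*}
The key calculation is then to check that $P:=I-\kappa L_1$ is a row-stochastic matrix: its row sums are $1$ because $L_1\mathbf{1}=0$, and since the diagonal entries of $L_1$ coincide with the original modulus degrees $d_i\leq\Delta$, the assumption $0<\kappa<1/\Delta$ ensures that all diagonal entries $1-\kappa d_i$ are strictly positive while all off-diagonal entries $\kappa(A_1)_{ij}$ are nonnegative. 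The positive diagonal delivers aperiodicity and, combined with the spanning-tree property of $\G(A_1)$, lets me invoke the standard DT consensus result (e.g.\ from \cite{RBA07}) to conclude that $\lim_{k\to\infty}x(k)=(\nu^Tx(0))\mathbf{1}$. Undoing the change of variables yields $\lim_{k\to\infty}z(k)=(\eta^Tz(0))\zeta$, which in particular gives $|z_i(k)|\to|\eta^Tz(0)|$ for every $i$; generically this common limit is positive, establishing complex consensus.

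For the \emph{necessity} direction, I follow the same strategy as in Theorem~\ref{thm: 2}. First, if $\G(A)$ failed to have a spanning tree, a maximal subtree $T_1$ would span a proper subgraph $\G_1$, and the remaining vertices in $\G_2$ could not reach the root of $T_1$; the state of the root is then driven only by $\G_1$, while the vertices in $\G_2$ evolve independently of that root, so moduli of limits in the two components cannot be forced to agree, contradicting complex consensus. Next, complex consensus of \eqref{eq: discrete algorithm with complex L} forces the set of fixed points $\{y:Ly=0\}$ to contain a vector with $|y_i|$ constant in $i$; rescaling it into $\T^n$ and applying Lemma~\ref{lem: 1} shows that $A$ is essentially nonnegative.

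The step I expect to be most delicate is the stochasticity/aperiodicity bookkeeping: I need to verify carefully that $1-\kappa d_i>0$ strictly (which is exactly where the hypothesis $\kappa<1/\Delta$ is used) so that $P$ inherits the spanning-tree property of $\G(A_1)$ as an SIA matrix rather than merely a stochastic one; without this strict inequality the associated graph of $P$ could lose self-loops and acquire periodicity, breaking the convergence argument. Everything else is either a direct translation of the CT proof or an immediate consequence of the results already established in Section~\ref{section: complex Laplacians}.
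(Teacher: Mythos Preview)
Your sufficiency argument matches the paper's: apply the similarity $z=D_\zeta x$ to reduce \eqref{eq: discrete algorithm with complex L} to a standard discrete consensus on the nonnegative graph $\G(A_1)$ and invoke the classical result. Your extra bookkeeping that $P=I-\kappa L_1$ is row-stochastic with strictly positive diagonal is correct and simply makes explicit what the paper leaves to the citation.

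The necessity direction, however, has a gap. You assert that complex consensus ``forces the set of fixed points $\{y:Ly=0\}$ to contain a vector with $|y_i|$ constant in $i$'', but complex consensus only says that the \emph{moduli} $|z_i(k)|$ converge to a common positive value; it does not by itself guarantee that $z(k)$ converges, nor that any limiting behavior is captured by $\ker L$, nor even that $\ker L\neq\{0\}$. In the CT proof this step is harmless because the Ger\v{s}gorin localization of $L$ established earlier in Section~\ref{section: complex Laplacians} already pins the only marginal eigenvalue at zero. In the DT setting one must additionally rule out eigenvalues of $I-\kappa L$ elsewhere on the unit circle, and this is exactly where the hypothesis $0<\kappa<1/\Delta$ enters the necessity argument---a point your write-up uses only for sufficiency. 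The paper handles this explicitly: applying Ger\v{s}gorin to $-\kappa L$ and using $\kappa\Delta<1$ shows that every eigenvalue of $I-\kappa L$ lies in $\{|z|<1\}\cup\{1\}$. Complex consensus then forces $1$ to actually be an eigenvalue (otherwise $z(k)\to 0$, contradicting $a>0$), and taking any associated eigenvector as initial data shows it must have constant nonzero moduli. Only after this spectral localization does Lemma~\ref{lem: 1} legitimately apply.
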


\begin{proof}
Assume that $A$ is essentially nonnegative and $\G(A)$ has a spanning tree.
By Propositions~\ref{prop: 2}, we have $L$ has a simple eigenvalue at zero with an associated eigenvector $\zeta\in\T^n$. Thus, we conclude that $A_1=D_\zeta^{-1}AD_\zeta$ is nonnegative and $D_\zeta^{-1}LD_\zeta$ has a simple eigenvalue at zero with an associated eigenvector $\mathbf{1}$. Let $z=D_\zeta x$. By system \eqref{eq: discrete algorithm with complex L}, we can see that $x$ satisfies the system
\begin{equation*}
x(k+1)=(I-\kappa D_\zeta^{-1}LD_\zeta)x(k).
\end{equation*}
Note that this is the standard consensus problem. From~\cite{RBA07}, it follows that
$$
\lim_{k\rightarrow\infty}x(k)=\nu^Tx(0)\mathbf{1}=\nu^TD_\zeta^{-1}z(0)\mathbf{1}.
$$
This is equivalent to
$$
\lim_{k\rightarrow\infty}z(k)=(\nu^TD_\zeta^{-1}z(0))D_\zeta\mathbf{1}=(\eta^Tz(0))\zeta.
$$

To show the other direction,
we now assume that the system \eqref{eq: discrete algorithm with complex L} reaches
complex consensus. Using the same arguments as for the CT system \eqref{eq: Algorithm with complex L} above, we can see that $\G(A)$ have a spanning tree.
On the other hand, based on the Ger\v{s}gorin disk theorem \cite[Theorem 6.1.1]{HJ87}, all the eigenvalues
of $-\kappa L$ are located in the union of the following $n$
disks:
$$
\left\{z\in\C: \left|z+\kappa\sum_{j\in\N_i}|a_{ij}|\right|\leq\kappa\sum_{j\in\N_i}|a_{ij}|\right\}, \ i=1,\ldots,n.
$$
Clearly, all these $n$ disks are contained in the largest disk defined by
$$
\left\{z\in\C: \left|z+\kappa\Delta\right|\leq\kappa\Delta\right\}.
$$
Noting that $0<\kappa<1/\Delta$, we can see that the largest disk is contained in
the region $\left\{z\in\C: \left|z+1\right|<1\right\}\cup\{0\}$. By translation, we have all the eigenvalues of $I-\kappa L$ are located in the following region:
$$
\left\{z\in\C: |z|<1\right\}\cup\{1\}.
$$
Since the system \eqref{eq: discrete algorithm with complex L} reaches complex consensus
we can see that $1$ must be the eigenvalue of $I-\kappa L$. All other eigenvalue of $I-\kappa L$ have the modulus strictly smaller than $1$.  Moreover, if $y=[y_1,\ldots,y_n]^T$ is an eigenvector of $I-\kappa L$ corresponding to eigenvalue $1$, then $|y_i|=|y_j|>0$ for $i,j=1,\ldots,n$. That is, zero is an eigenvalue of $L$ with an eigenvalue $\zeta\in\T^n$. It thus follows from Lemma~\ref{lem: 1} that $A$ is essentially nonnegative. We complete the proof of Theorem \ref{thm: discrete version of 2}.
\end{proof}

\begin{rem}\label{rem: Hermitian}
{\rm \noindent
\begin{itemize}
\item[1)] In Theorems \ref{thm: 2} and \ref{thm: discrete version of 2}, the key point is to check the condition that $A$ is essentially nonnegative which, by Proposition~\ref{prop: 1}, can be done by examining the condition that $\G(A_H)$ is balanced and $a_{ij}a_{ji}\geq0$ for all $1\leq i,j\leq n$.

\item[2)]For the special case when $A$ is Hermitian, Theorems \ref{thm: 2} and \ref{thm: discrete version of 2} take a simpler form. As an example, we consider the CT system \eqref{eq: Algorithm with complex L} with $A$ being Hermitian. In this case, it follows from Proposition~\ref{prop: 1} that $A$ is essentially nonnegative if and only if $\G(A)$ is balanced. Then we have that the CT system \eqref{eq: Algorithm with complex L} reaches complex consensus if and only if $\G(A)$ has a spanning tree and is balanced. In this case,
$$
\lim_{t\rightarrow\infty}z(t)=\frac{1}{n}(\zeta^*z(0))\zeta.
$$
In addition, in view of Lemma~\ref{lem: 2}, it yields that  $\lim_{t\rightarrow\infty}z(t)=0$ when $\G(A)$ has a spanning tree and is unbalanced.
\item[3)] By the standard consensus results in \cite{RB05}, Theorems~\ref{thm: 2} and
\ref{thm: discrete version of 2} can be generalized to the case of switching topology. We omit the details to avoid repetitions.
\end{itemize}
}
\end{rem}

\begin{rem}
{\rm \noindent
\begin{itemize}
\item[1)]
 Theorems~\ref{thm: 2} and \ref{thm: discrete version of 2} actually give an equivalent condition to ensure that all the agents converge to a common
circle centered at the origin. Motivated by this observation, we can modify the two systems~\eqref{eq: Algorithm with complex L} and \eqref{eq: discrete algorithm with complex L} accordingly to study the circular formation problems. Similar to Theorems~\ref{thm: 2} and \ref{thm: discrete version of 2}, we can establish a necessary and sufficient condition to ensure all the agents converge to a common
circle centered at a given point and are distributed along the
circle in a desired pattern, expressed by the prespecified angle
separations and ordering among agents. We omit the details due to space limitations.
\item[2)] Part of Theorem~\ref{thm: 2} has been obtained in the literature, see~\cite[Theorems III.5 and III.6]{LH14}. As potential applications,  the reuslts in Section~\ref{section: complex Laplacians} can be used to study the set surrounding control problems~\cite{LH14}. A detailed analysis for this is beyond the scope of this paper.
\end{itemize}
}
\end{rem}

\subsection{Bipartite consensus revisited}
As an application, we now revisit some bipartite consensus results from Theorems~\ref{thm: 2} and \ref{thm: discrete version of 2}. We will see that these bipartite consensus results improve the existing results in the literature.

Let $\G(A)$ be a signed graph, i.e., $A=[a_{ij}]_{n\times n}\in\R^{n\times n}$ and $a_{ij}$ can be negative.
By bipartite consensus, we mean on a signed graph, all agents converge to a consensus value
whose absolute value is the same for all agents except for the sign.
The state $z$ is now restricted to the field
of real numbers $\R$, denoted by $x$. Then the two systems \eqref{eq: Algorithm with complex L} and \eqref{eq: discrete algorithm with complex L} reduces to the standard consensus systems:
\begin{equation}\label{eq: standard CT system}
\dot{x}(t)=-\kappa Lx(t)
\end{equation}
and
\begin{equation}\label{eq: standard DT system}
x(k+1)=x(k)-\kappa Lx(k).
\end{equation}
With the above two systems and based on Theorems~\ref{thm: 2} and \ref{thm: discrete version of 2}, we can derive the bipartite consensus results on signed graphs.
\begin{cor}\label{thm: signed 2}
Let $\G(A)$ be a signed digraph. Then the CT system \eqref{eq: standard CT system} achieves
bipartite consensus asymptotically if and only if $A$ is essentially nonnegative and
$\G(A)$ has a spanning tree. In this case, for any initial state $x(0)\in\R^n$, we have
$$
\lim_{t\rightarrow\infty}x(t)=(\eta^Tx(0))\sigma
$$
where $\sigma=[\sigma_1\ldots,\sigma_n]^T\in\{\pm1\}^n$ such that $D_{\sigma}AD_{\sigma}$ is nonnegative matrix and $\eta^TL=0$ with $\eta=[\eta_1,\dots,\eta_n]^T\in\R^n$ and $\|\eta\|_1=1$.
\end{cor}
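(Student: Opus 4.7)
The plan is to deduce this corollary directly from Theorem~\ref{thm: 2} specialized to real $A$ and real initial data. The only genuinely new ingredient is showing that when the adjacency matrix $A$ is real, the switching vector $\zeta\in\T^n$ supplied by Proposition~\ref{prop: 2} can be replaced by a vector $\sigma\in\{\pm 1\}^n$, so that the ``complex'' consensus value becomes a genuine bipartite (signed) consensus value. Once this reduction is in hand, the limit formula $(\eta^T x(0))\sigma$ is literally the conclusion of Theorem~\ref{thm: 2} read off in the real case.

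First I would produce $\sigma$. Starting from any $\zeta\in\T^n$ with $D_\zeta^{-1}AD_\zeta$ nonnegative, the condition $\bar\zeta_i a_{ij}\zeta_j\ge 0$ on every edge $(j,i)$ combined with $a_{ij}\in\R\setminus\{0\}$ forces $\bar\zeta_i\zeta_j\in\{\pm 1\}$: a unimodular number that is real must be $\pm 1$. Picking a root $r$ of the spanning tree and walking along the (directed) tree-paths from $r$ to every other vertex, the ratio $\zeta_i/\zeta_r$ is a product of factors in $\{\pm 1\}$, hence itself lies in $\{\pm 1\}$. Normalizing by $\zeta_r$ (which does not affect $D_\zeta^{-1}AD_\zeta$) yields $\sigma\in\{\pm 1\}^n$ with $D_\sigma A D_\sigma=D_\sigma^{-1}AD_\sigma$ nonnegative, confirming that the $\sigma$ appearing in the statement exists.

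Next I would apply Theorem~\ref{thm: 2}. Regarding \eqref{eq: standard CT system} as a special instance of \eqref{eq: Algorithm with complex L} with $z(0)=x(0)\in\R^n$, the trajectory remains in $\R^n$ because $L$ is real. Theorem~\ref{thm: 2} therefore gives $\lim_{t\to\infty}x(t)=(\eta^T x(0))\sigma$ where $\eta=D_\sigma\nu$ and $\nu^T(D_\sigma L D_\sigma)=0$, $\|\nu\|_1=1$. Since $D_\sigma L D_\sigma$ is a real (standard) Laplacian, its left null vector $\nu$ can be chosen real and nonnegative, so $\eta\in\R^n$ with $\|\eta\|_1=1$ and $\eta^T L=0$, exactly as claimed. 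The equality $|x_i(t)|\to|\eta^T x(0)|$ for every $i$, together with $\sigma\in\{\pm 1\}^n$, is precisely bipartite consensus.

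For the converse, bipartite consensus is a special case of complex consensus (the limiting moduli agree), so Theorem~\ref{thm: 2} immediately forces $A$ to be essentially nonnegative and $\G(A)$ to have a spanning tree. The only step that requires any real thought is the $\{\pm 1\}$-propagation in paragraph two; everything else is bookkeeping on top of Theorem~\ref{thm: 2}, so I expect no substantive obstacle.
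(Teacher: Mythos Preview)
Your proposal is correct and follows exactly the route the paper intends: the paper presents this corollary without an explicit proof, simply noting that it is derived from Theorem~\ref{thm: 2} (and Theorem~\ref{thm: discrete version of 2}) specialized to signed graphs. You supply the one detail the paper leaves implicit---that for real $A$ the switching vector can be taken in $\{\pm 1\}^n$ by propagating the edge constraints $\bar\zeta_i a_{ij}\zeta_j\ge 0$ along a spanning tree---and otherwise your argument is the straightforward reading of Theorem~\ref{thm: 2} in the real case.
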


\begin{cor}\label{thm: signed D version of 2}
Let $\G(A)$ be a signed digraph. Then the DT system \eqref{eq: standard DT system} with $0<\kappa<1/\Delta$ achieves
bipartite consensus asymptotically if and only if $A$ is essentially nonnegative and
$\G(A)$ has a spanning tree. In this case, for any initial state $x(0)\in\R^n$, we have
$$
\lim_{k\rightarrow\infty}x(k)=(\eta^Tx(0))\sigma
$$
where $\eta$ and $\sigma$ are defined as in Corollary~\ref{thm: signed 2}.
\end{cor}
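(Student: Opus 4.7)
The plan is to derive Corollary~\ref{thm: signed D version of 2} as a direct specialization of Theorem~\ref{thm: discrete version of 2} to the real case. Over the reals, complex consensus coincides with bipartite consensus, so almost no new analytical work is needed; the only thing to verify is that the gauge vector $\zeta$ and the left null vector $\eta$ coming out of Theorem~\ref{thm: discrete version of 2} can be taken real.

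First I would observe that \eqref{eq: standard DT system} is nothing but \eqref{eq: discrete algorithm with complex L} restricted to real $A$ and real $x(0)$. Since $I-\kappa L$ is real, the iterates $x(k)$ stay in $\R^n$ for all $k$. Applying Theorem~\ref{thm: discrete version of 2} verbatim, bipartite consensus (being a form of complex consensus) forces $A$ to be essentially nonnegative and $\G(A)$ to have a spanning tree; conversely, under these conditions the theorem produces a right null vector $\zeta\in\T^n$ of $L$ and a normalized left null vector, giving $\lim_{k\to\infty} x(k)=(\eta^T x(0))\zeta$. Because $x(k)\in\R^n$, the limit is real with every entry of modulus $|\eta^T x(0)|$, hence each entry equals $\pm |\eta^T x(0)|$; this is precisely bipartite consensus.

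The one delicate point---and the main obstacle---is to refine the gauge so that the formula reads $(\eta^T x(0))\sigma$ with $\sigma\in\{\pm 1\}^n$ and $\eta\in\R^n$. This follows from the simplicity of zero as an eigenvalue of the real matrix $L$ provided by Proposition~\ref{prop: 2}. Given any $\zeta\in\T^n$ with $L\zeta=0$, conjugation gives $L\bar{\zeta}=0$, so by simplicity $\bar{\zeta}=c\zeta$ for some $c\in\T$; choosing $\lambda\in\T$ with $\lambda^2=c$ and replacing $\zeta$ by $\lambda^{-1}\zeta$ produces a real right null vector whose entries have modulus one, i.e.\ the desired $\sigma\in\{\pm 1\}^n$. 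By Lemma~\ref{lem: 1} this $\sigma$ automatically satisfies $D_\sigma AD_\sigma\ge 0$. The same argument applied to $L^T$ (or, equivalently, the Perron left eigenvector of the nonnegative Laplacian associated with $D_\sigma AD_\sigma$) yields a real left null vector, which we normalize to $\|\eta\|_1=1$. Substituting these real choices into the formula from Theorem~\ref{thm: discrete version of 2} gives the stated expression, completing the derivation.
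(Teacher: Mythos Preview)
Your approach---specializing Theorem~\ref{thm: discrete version of 2} to the real setting---is exactly what the paper intends; the corollary is stated there without proof as an immediate consequence of that theorem, so your plan matches the paper's own route. The extra care you take in showing that $\zeta$ and $\eta$ may be chosen real is a detail the paper leaves implicit.

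One small computational slip in the gauge adjustment: from $\bar\zeta=c\zeta$ with $\zeta\in\T^n$, $c\in\T$, setting $\zeta'=\lambda^{-1}\zeta$ and using $\bar\lambda=\lambda^{-1}$ gives $\overline{\zeta'}=\lambda\,\bar\zeta=\lambda c\,\zeta=\lambda^2 c\,\zeta'$, so reality of $\zeta'$ requires $\lambda^2=\bar c$, not $\lambda^2=c$. (Equivalently, and perhaps more transparently: since $L$ is real with a simple zero eigenvalue, $\operatorname{Re}\zeta$ and $\operatorname{Im}\zeta$ both lie in $\ker L$ and are therefore proportional, which forces all $|\!\operatorname{Re}\zeta_i|$ to be equal; normalizing yields $\sigma\in\{\pm1\}^n$.) With this correction your argument goes through unchanged.
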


\begin{rem}
{\rm Corollary \ref{thm: signed 2}  indicates that bipartite consensus can be achieved under a condition weaker than that given in Theorem~2 in~\cite{Altafini13}. In addition, we also obtain a similar necessary and sufficient condition for bipartite consensus of the DT system~\eqref{eq: standard DT system}.}
\end{rem}

\section{Examples}\label{section: examples}
In this section we present some examples to illustrate our results.
\begin{exa}
{\rm Consider the complex graph $\G(A)$ illustrated in Figure \ref{fig: balanced} with adjacency matrix
$$
A=\begin{bmatrix}
    0 & 0 & -\mathrm{j} & 0 \\
    1 & 0 & 0 & 0 \\
    0 & \mathrm{j} & 0 & 0 \\
    0 & 1+\mathrm{j} & 0 & 0 \\
\end{bmatrix}.
$$
It is trivial that $\G(A)$ has a spanning tree. Since $\G(A_H)$ is balanced, Proposition~\ref{prop: 1} implies that $A$ is essentially nonnegative. Furthermore,
defining $\zeta=[1,1,\mathrm{j},e^{\mathrm{j}\frac{\pi}{4}}]^T\in\T^4$, we have
$$
A_1=D_\zeta^{-1}AD_\zeta=\begin{bmatrix}
    0 & 0 & 1 & 0 \\
    1 & 0 & 0 & 0 \\
    0 & 1 & 0 & 0 \\
    0 & \sqrt{2} & 0 & 0 \\
\end{bmatrix}.
$$

The set of eigenvalues of the complex Laplacian $L$ is $\{0, \sqrt{2}, 3/2+\sqrt{3}\mathrm{j}/2, 3/2-\sqrt{3}\mathrm{j}/2\}$. The vector $\zeta$ is an eigenvector associated with eigenvalue zero. A simulation under system \eqref{eq: Algorithm with complex L} is given in Figure \ref{fig: Modolus consensus}, which shows that the complex consensus is reached asymptotically. This confirms the analytical results of Theorems \ref{thm: 2} and \ref{thm: discrete version of 2}.
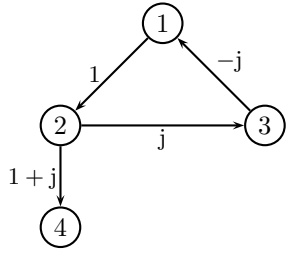
\begin{figure}[t]
\begin{center}
\begin{pspicture}(-2.5,-1)(2.5,2)
\rput(0,0){
$
\psmatrix[mnode=circle,colsep=0.8cm,rowsep=0.8cm]
  & 1 \\
2 & & 3 \\
4 \\
&
\endpsmatrix
\psset{shortput=nab,arrows=->,labelsep=0.5pt}
\small
\ncline{1,2}{2,1}
\tlput{1}
\ncline{2,3}{1,2}_{-\mathrm{j}}
\ncline{2,1}{2,3}_{\mathrm{j}}
\ncline{2,1}{3,1}
\tlput{1+\mathrm{j}}
$}
\end{pspicture}
\end{center}
\caption{Balanced graph.}
\label{fig: balanced}
\end{figure}
\begin{figure}[t]
\centering
\includegraphics[width=0.3\textwidth]{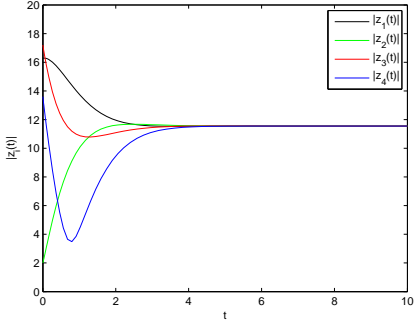}
\caption{Complex consensus process of the agents.}\label{fig: Modolus consensus}
\end{figure}}
\end{exa}

\begin{exa}\label{exa: 1}
{\rm Consider the complex graph $\G(A)$ illustrated in Figure \ref{fig: unbalanced1} with adjacency matrix
$$
A=\begin{bmatrix}
    0 & 0 & 0 & 0 & 0 & 1\\
    0 & 0 & 1 & 0 & 0 & 0 \\
    0 & 0 & 0 & 1-\mathrm{j} & 0 & 0\\
     0 & \mathrm{j} & 0 & 0 & 0 & 0\\
      \mathrm{j}& 0 & 0 & 0 & 0 & 0\\
       0 & 0 & 0 & 0 & -\mathrm{j} & 0\\
\end{bmatrix}.
$$
We can see that $\G(A)$ has a spanning tree and $A$ is not essentially nonnegative since $\G(A_H)$ is unbalanced. We can verify that zero is an eigenvalue of $L$. The simulation in Figure \ref{fig: no MC} shows that the complex consensus cannot be reached.
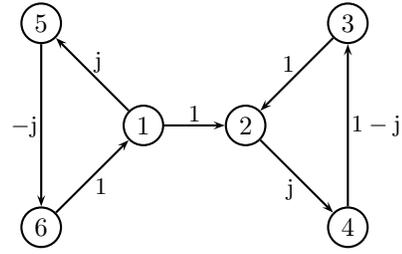
\begin{figure}[t]
\begin{center}
\begin{pspicture}(-2.5,-1)(2.5,2)
\rput(0,0){
$
\psmatrix[mnode=circle,colsep=0.8cm,rowsep=0.8cm]
  5 & & & 3\\
  & 1 & 2\\
  6 & & & 4 \\
&
\endpsmatrix
\psset{shortput=nab,arrows=->,labelsep=0.5pt}
\small
\ncline{1,1}{3,1}
\tlput{-\mathrm{j}}
\ncline{2,2}{1,1}_{\mathrm{j}}
\ncline{3,1}{2,2}_{1}
\ncline{2,2}{2,3}^{1}
\ncline{2,3}{3,4}_{\mathrm{j}}
\ncline{3,4}{1,4}_{1-\mathrm{j}}
\ncline{1,4}{2,3}_{1}
$}
 \end{pspicture}
\end{center}
\caption{Unbalanced graph.}
\label{fig: unbalanced1}
\end{figure}
\begin{figure}[t]
\centering
\includegraphics[width=0.28\textwidth]{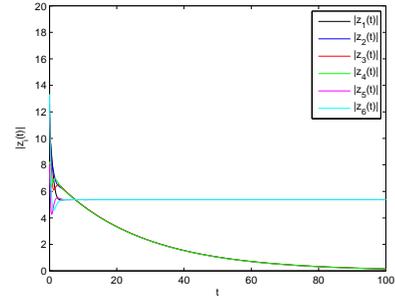}
\caption{Trajectories of the agents which mean that complex consensus cannot be reached.}\label{fig: no MC}
\end{figure}}
\end{exa}

\section{Conclusion}\label{section: conclusion}
Motivated by the study of bipartite consensus problems, we discuss the consensus problems in complex-weighted graphs. To this end, we first establish some key properties of the complex Laplacian. We emphasize that these properties can be examined by checking the properties of the corresponding digraph. Then we give some necessary and sufficient conditions to ensure the convergence of complex consensus. It is shown that these general consensus results can be used to study some distributed coordination control problems of multi-agent systems in a plane. In particular, these results cover the bipartite consensus results on signed digraphs. We believe that the properties of the complex Laplacian obtained in this paper are useful in other multi-agent coordination problems in a plane.


\begin{thebibliography}{99}

\bibitem{Altafini13}
\newblock C. Altafini,
\newblock ``Consensus problems on networks with antagonistic interactions,''
\newblock {\em IEEE Trans. Automat. Control}, vol. 58, no. 4, pp. 935--946, 2013.

\bibitem{BS03}
\newblock R. W. Beard and V. Stepanyan,
\newblock ``Information consensus in distributed multiple vehicle coordinated control,''
\newblock In {\em Proc. 42nd IEEE Conf. Decision Control}, 2003, pp. 2029--2034.

\bibitem{CLHY11}
\newblock Y. Chen, J. L\"{u}, F. Han and X. Yu,
\newblock ``On the cluster consensus of discrete-time multi-agent systems,''
\newblock {\em Systems Control Lett.}, vol. 60,  pp. 517--523, 2011.

\bibitem{HJ87}
\newblock R. A. Horn and C. R. Johnson,
\newblock {\em Matrix Analysis},
\newblock Cambridge, U.K.: Cambridge Univ. Press, 1987.

\bibitem{JLM03}
\newblock A. Jadbabaie, J. Lin, and A. S. Morse,
\newblock ``Coordination of groups of mobile autonomous agents using nearest neighbor rules,''
\newblock{\em IEEE Trans. Automat. Control}, vol. 48, no. 6, pp. 988--1001, 2003.

\bibitem{LDYYG13}
\newblock Z. Lin, W. Ding, G. Yan, C. Yu and A. Giua,
\newblock ``Leader-follower formation via complex Laplacian,''
\newblock {\em Automatica}, vol. 49, no. 6, pp. 1900--1906, 2013.

\bibitem{LWHF14}
\newblock Z. Lin, L. Wang, Z. Han, and M. Fu,
\newblock ``Distributed formation control of multi-agent systems using complex Laplacian,''
\newblock {\em IEEE Trans. Automat. Control}, vol. 59, no. 7, pp. 1765--1777, 2014.

\bibitem{LH14}
\newblock Y. Lou and Y. Hong,
\newblock ``Distributed surrounding design of target region with complex adjacency matrices,''
\newblock{\em IEEE Trans. Automat. Control}, vol. 60, no. 1, pp. 283--288, 2015.

\bibitem{MSJCH14}
\newblock Z. Meng, G. Shi, K.H. Johansson, M. Cao and Y. Hong,
\newblock ``Modulus consensus over networks with antagonistic interactions and switching topologies,"
\newblock http://arxiv.org/abs/1402.2766, 2014.

\bibitem{Morbidi13}
\newblock F. Morbidi,
\newblock ``The deformed consensus protocol,''
\newblock {\em Automatica}, vol. 49, pp. 3049--3055, 2013.

\bibitem{OSM04}
\newblock R. Olfati-Saber and R. Murray,
\newblock ``Consensus problems in networks of agents with switching topology and time-delays,''
\newblock {\em IEEE Trans. Automat. Control}, vol. 49, no. 9, pp. 1520--1533, 2004.

\bibitem{OSFM07}
\newblock R. Olfati-Saber, J. A. Fax and R. M. Murray,
\newblock ``Consensus and cooperation in networked multi-agent systems,''
\newblock {\em Proc. IEEE}, vol. 95, no. 1, pp. 215--233, 2007.

\bibitem{OT11}
\newblock A. Olshevsky and J. N. Tsitsiklis,
\newblock ``Convergence speed in distributed consensus and averaging,''
\newblock {\em  SIAM Rev.}, vol. 53, no. 4, pp. 747--772, 2011.

\bibitem{Reff12}
\newblock N. Reff,
\newblock ``Spectral properties of complex unit gain graphs,''
\newblock {\em Linear Algebra Appl.}, vol. 436, no. 9, pp. 3165--3176, 2012.

\bibitem{Ren07}
\newblock W. Ren,
\newblock ``Multi-vehicle consensus with a time-varying reference state,''
\newblock {\em Systems Control Lett.}, vol. 56, no. 7, pp. 474--483, 2007.

\bibitem{RBA07}
\newblock W. Ren, R. Beard, and E. Atkins,
\newblock ``Information consensus in multivehicle cooperative control,''
\newblock {\em IEEE Control Syst. Mag.}, vol. 27, no. 2, pp. 71--82, 2007.

\bibitem{RB05}
\newblock W. Ren and R. W. Beard,
\newblock ``Consensus seeking in multiagent systems under dynamically changing interaction topologies,''
\newblock {\em IEEE Trans. Automat. Control}, vol. 50, no. 5, pp. 655--661, 2005.

\bibitem{Z89}
\newblock T. Zaslavsky,
\newblock ``Biased graphs. I. Bias, balance, and gains,''
\newblock {\em J. Combin. Theory Ser. B}, vol. 47, no. 1, pp. 32--52, 1989.
\end{thebibliography}
\end{document}